\begin{document}
\baselineskip = 16pt

\newcommand \ZZ {{\mathbb Z}}
\newcommand \NN {{\mathbb N}}
\newcommand \RR {{\mathbb R}}
\newcommand \CC {{\mathbb C}}
\newcommand \PR {{\mathbb P}}
\newcommand \AF {{\mathbb A}}
\newcommand \GG {{\mathbb G}}
\newcommand \QQ {{\mathbb Q}}
\newcommand \bcA {{\mathscr A}}
\newcommand \bcC {{\mathscr C}}
\newcommand \bcD {{\mathscr D}}
\newcommand \bcF {{\mathscr F}}
\newcommand \bcG {{\mathscr G}}
\newcommand \bcH {{\mathscr H}}
\newcommand \bcM {{\mathscr M}}
\newcommand \bcJ {{\mathscr J}}
\newcommand \bcL {{\mathscr L}}
\newcommand \bcO {{\mathscr O}}
\newcommand \bcP {{\mathscr P}}
\newcommand \bcQ {{\mathscr Q}}
\newcommand \bcR {{\mathscr R}}
\newcommand \bcS {{\mathscr S}}
\newcommand \bcV {{\mathscr V}}
\newcommand \bcW {{\mathscr W}}
\newcommand \bcX {{\mathscr X}}
\newcommand \bcY {{\mathscr Y}}
\newcommand \bcZ {{\mathscr Z}}
\newcommand \goa {{\mathfrak a}}
\newcommand \gob {{\mathfrak b}}
\newcommand \goc {{\mathfrak c}}
\newcommand \gom {{\mathfrak m}}
\newcommand \gon {{\mathfrak n}}
\newcommand \gop {{\mathfrak p}}
\newcommand \goq {{\mathfrak q}}
\newcommand \goQ {{\mathfrak Q}}
\newcommand \goP {{\mathfrak P}}
\newcommand \goM {{\mathfrak M}}
\newcommand \goN {{\mathfrak N}}
\newcommand \uno {{\mathbbm 1}}
\newcommand \Le {{\mathbbm L}}
\newcommand \Spec {{\rm {Spec}}}
\newcommand \Gr {{\rm {Gr}}}
\newcommand \Pic {{\rm {Pic}}}
\newcommand \Jac {{{J}}}
\newcommand \Alb {{\rm {Alb}}}
\newcommand \Corr {{Corr}}
\newcommand \Chow {{\mathscr C}}
\newcommand \Sym {{\rm {Sym}}}
\newcommand \Prym {{\rm {Prym}}}
\newcommand \cha {{\rm {char}}}
\newcommand \eff {{\rm {eff}}}
\newcommand \tr {{\rm {tr}}}
\newcommand \Tr {{\rm {Tr}}}
\newcommand \pr {{\rm {pr}}}
\newcommand \ev {{\it {ev}}}
\newcommand \cl {{\rm {cl}}}
\newcommand \interior {{\rm {Int}}}
\newcommand \sep {{\rm {sep}}}
\newcommand \td {{\rm {tdeg}}}
\newcommand \alg {{\rm {alg}}}
\newcommand \im {{\rm im}}
\newcommand \gr {{\rm {gr}}}
\newcommand \op {{\rm op}}
\newcommand \Hom {{\rm Hom}}
\newcommand \Hilb {{\rm Hilb}}
\newcommand \Sch {{\mathscr S\! }{\it ch}}
\newcommand \cHilb {{\mathscr H\! }{\it ilb}}
\newcommand \cHom {{\mathscr H\! }{\it om}}
\newcommand \colim {{{\rm colim}\, }} 
\newcommand \End {{\rm {End}}}
\newcommand \coker {{\rm {coker}}}
\newcommand \id {{\rm {id}}}
\newcommand \van {{\rm {van}}}
\newcommand \spc {{\rm {sp}}}
\newcommand \Ob {{\rm Ob}}
\newcommand \Aut {{\rm Aut}}
\newcommand \cor {{\rm {cor}}}
\newcommand \Cor {{\it {Corr}}}
\newcommand \res {{\rm {res}}}
\newcommand \red {{\rm{red}}}
\newcommand \Gal {{\rm {Gal}}}
\newcommand \PGL {{\rm {PGL}}}
\newcommand \Bl {{\rm {Bl}}}
\newcommand \Sing {{\rm {Sing}}}
\newcommand \spn {{\rm {span}}}
\newcommand \Nm {{\rm {Nm}}}
\newcommand \inv {{\rm {inv}}}
\newcommand \codim {{\rm {codim}}}
\newcommand \Div{{\rm{Div}}}
\newcommand \sg {{\Sigma }}
\newcommand \DM {{\sf DM}}
\newcommand \Gm {{{\mathbb G}_{\rm m}}}
\newcommand \tame {\rm {tame }}
\newcommand \znak {{\natural }}
\newcommand \lra {\longrightarrow}
\newcommand \hra {\hookrightarrow}
\newcommand \rra {\rightrightarrows}
\newcommand \ord {{\rm {ord}}}
\newcommand \Rat {{\mathscr Rat}}
\newcommand \rd {{\rm {red}}}
\newcommand \bSpec {{\bf {Spec}}}
\newcommand \Proj {{\rm {Proj}}}
\newcommand \pdiv {{\rm {div}}}
\newcommand \CH {{\it {CH}}}
\newcommand \wt {\widetilde }
\newcommand \ac {\acute }
\newcommand \ch {\check }
\newcommand \ol {\overline }
\newcommand \Th {\Theta}
\newcommand \cAb {{\mathscr A\! }{\it b}}

\newenvironment{pf}{\par\noindent{\em Proof}.}{\hfill\framebox(6,6)
\par\medskip}

\newtheorem{theorem}[subsection]{Theorem}
\newtheorem{conjecture}[subsection]{Conjecture}
\newtheorem{proposition}[subsection]{Proposition}
\newtheorem{lemma}[subsection]{Lemma}
\newtheorem{remark}[subsection]{Remark}
\newtheorem{remarks}[subsection]{Remarks}
\newtheorem{definition}[subsection]{Definition}
\newtheorem{corollary}[subsection]{Corollary}
\newtheorem{example}[subsection]{Example}
\newtheorem{examples}[subsection]{examples}

\title{The variation of the Gysin kernel in a family}
\author{Kalyan Banerjee}

\email{kalyan@math.tifr.res.in, kalyan.b@srmap.edu.in}
\begin{abstract}
Consider a smooth projective surface $S$. Consider a fibration $S\to C$ where $C$ is a quasi-projective curve such the fibers are smooth projective curves. The aim of this text is to show that the kernels of the push-forward homomorphism $\{j_{t*}\}_{t\in C}$ from the Jacobian $J(C_t)$ to $A_0(S)$ forms a family in the sense that it is a countable union of translates of an abelian scheme over $C$ sitting inside the Jacobian scheme $\bcJ\to C$, such that the fiber of this countable union at $t$ is the kernel of $j_{t*}$.
\end{abstract}

\maketitle

\section{Introduction}
In the paper \cite{BG} the author studies the following problem. Let $S$ be a complex projective smooth surface embedded in $\PR^N$. Let $C_t$ be a smooth hyperplane section of $S$ and $j_t$ denote the closed embedding of $C_t$ into $S$. Then $j_t$ induces a push-forward homomorphism $j_{t*}$ from $A_0(C_t)$ to $A_0(S)$, where $A_0$ is the group of algebraically trivial zero cycles modulo rational equivalence on the ambient variety. The question is what is the kernel of this homomorphism? The answer as present in the paper \cite{BG} tells us that if the geometric genus of the surface is greater than zero and the irregularity of the surface is zero then the kernel for a very general $t$ is countable. 

The main method used in proving this is the technique due to Roitman as present in \cite{R}, saying that the natural map from the symmetric powers of a surface to the Chow group of zero cycles on the surface has its fibers equal to a countable union of Zariski closed subsets. This leads to the fact that the kernel of $j_{t*}$ is a countable union of translates of an abelian variety in $J(C_t)$, where $J(C_t)$ is the Jacobian variety of $C_t$. Then since we are working with a complete linear system of curves on $S$, the presence of monodromy assures the simplicity of $J(C_t)$. This leads to the fact that $j_{t*}$ is either zero or its kernel is countable. Finally the fact that the geometric genus of the surface is greater than zero eliminates the first possibility.

The question in this paper is what happens to the kernels of $j_{t*}$ when $t$ varies. That is can we prove that either the kernel is countable for any $t$ such that $C_t$ is smooth or $j_{t*}$ is zero for any $t$. That is the kernel varies in a family in a nice algebro-geometric way. 

In this paper we answer this question: that is when we consider a very ample line bundle on a complex smooth projective surface, then the kernel of $j_{t*}$ varies in a family in the following sense: There exists a countable collection of Zariski closed subsets in the relative Jacobian scheme (the family of Jacobians of the smooth projective curves $C_t$'s where $t$ varies), such that a unique one of them is an abelian scheme say $\bcA$. Moreover the fibers of the scheme $\bcA$ at $t$ is the abelian variety whose translates gives rise to $\ker(j_{t*})$. This is done by generalizing the approach present in \cite{R}. Then due to \cite{Vo} the presence of monodromy ensures that $\bcA$ is either trivial or all of the Jacobian scheme, giving affirmative answer to our question. In the case of triviality a nice argument due to Voisin present in \cite{Vo}[theorem 7.22] tell us that the kernel is actually torsion. This applies to the case of K3 surfaces and we derive the main result of the paper:

\smallskip

\textit{Consider a $t$ in $\PR^N$ such that the corresponding fiber $C_t$ in $S$ is smooth. Suppose that the abelian variety $A_t$ is trivial and hence the kernel of $j_{t*}$ is countable. Then the kernel of $j_{t*}$ is  torsion.}

\smallskip

{\small \textbf{Acknowledgements:} The author would like to thank the hospitality of Tata Institute Mumbai, for hosting this project. }

\section{Family of gysin kernels}

Let $S$ be a smooth projective surface fibered into smooth curves over a quasi-projective curve $C$. Let for $t\in C$, $C_t$ be the fiber over $t$ and $j_t$ be the closed embedding of $C_t$ in $S$. Then we can consider the push-forward homomorphism $j_{t*}$ from $A_0(C_t)$ to $A_0(S)$, where $A_0$ denote the group of algebraically trivial zero cycles modulo rational equivalence on the ambient variety. By the Abel-Jacobi theorem $A_0(C_t)$ is isomorphic to the Jacobian $J(C_t)$. Since the curves $C_t$ is smooth for every $t$, we can consider the family of Jacobians over $C$, call it $\bcJ$. Then consider the following subset of $\bcJ$:

$$\bcR:=\{(t,z)|z\in J(C_t), j_{t*}(z)=0\}$$
Then it is immediate that $\bcR_t$ is nothing but kernel of $j_{t*}$. So $\bcR$ is the family of the kernels of the push-forward homomorphisms $j_{t*}$, as $t$ varies over $C$.

We claim that:

\begin{theorem}
$\bcR$ is a countable union of Zariski closed subschemes of $\bcJ$.
\end{theorem}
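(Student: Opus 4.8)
The plan is to realize $\bcR$ as the image, under a proper morphism, of a locus on a relative symmetric product that is cut out by a rational-equivalence condition on the fixed surface $S$, and then to feed that condition into Roitman's countability theorem. Since the fibres $C_t$ are smooth projective curves, they have constant genus $g$ over the connected base $C$. Fix an integer $d \geq 2g+1$ and a relative effective Cartier divisor $D_0 \hra S$ of relative degree $d$ over $C$ (obtained, say, as a general section of a sufficiently high power of a relatively very ample line bundle, so that $D_0$ is finite flat of degree $d$ over $C$ and contains no fibre). Let $\bcS_d := \Sym^{d}_{C}(S) \lra C$ be the $d$-th relative symmetric product, with fibre $\Sym^d(C_t)$ over $t$; since $S \to C$ is proper (as $S$ is projective and $C$ separated) with quasi-projective source, $\bcS_d$ is projective over $C$. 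The relative Abel--Jacobi map
$$\sigma_d : \bcS_d \lra \bcJ, \qquad (t, D) \longmapsto \big(t,\, [D] - [(D_0)_t]\big),$$
is a $C$-morphism whose fibre over $t$ is the usual map $\Sym^d(C_t) \to J(C_t)$. As $d \geq 2g+1$, this fibre map is surjective (a $\PR^{d-g}$-bundle by Riemann--Roch), so $\sigma_d$ is surjective; and since $\bcS_d$ is proper over $C$ while $\bcJ$ is separated over $C$, the morphism $\sigma_d$ is proper.

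Next I translate the vanishing condition into rational equivalence on $S$. For $(t,D) \in \bcS_d$ and $z = [D] - [(D_0)_t] \in J(C_t)$, the fact that $j_t$ is a closed immersion gives $j_{t*}(z) = [D]_S - [(D_0)_t]_S$ in $\CH_0(S)$, i.e. push-forward merely regards the zero-cycle as lying on $S$. Hence $j_{t*}(z) = 0$ precisely when the two effective zero-cycles $D$ and $(D_0)_t$, viewed on $S$, are rationally equivalent. Accordingly, consider the morphism
$$\Phi : \bcS_d \lra \Sym^d(S) \times \Sym^d(S), \qquad (t, D) \longmapsto \big( \iota(D),\, (D_0)_t \big),$$
where $\iota$ forgets the fibre and regards $D$ as a zero-cycle on $S$; both coordinates are algebraic in $(t,D)$, so $\Phi$ is a genuine morphism of varieties.

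Now apply Roitman's theorem. By the result of \cite{R} as used in \cite{BG}, the set
$$R := \big\{ (Z, Z') \in \Sym^d(S) \times \Sym^d(S) : Z \sim Z' \text{ in } \CH_0(S) \big\}$$
is a countable union $R = \bigcup_{n} R_n$ of Zariski closed subsets of $\Sym^d(S) \times \Sym^d(S)$. Pulling back, $\widetilde{\bcR} := \Phi^{-1}(R) = \bigcup_n \Phi^{-1}(R_n)$ is a countable union of Zariski closed subschemes of $\bcS_d$, and by the computation above $\widetilde{\bcR}$ is exactly the locus of $(t,D)$ with $j_{t*}(\sigma_d(t,D)) = 0$. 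Finally, surjectivity of $\sigma_d$ gives $\bcR = \sigma_d(\widetilde{\bcR}) = \bigcup_n \sigma_d(\Phi^{-1}(R_n))$, and each $\sigma_d(\Phi^{-1}(R_n))$ is Zariski closed because $\sigma_d$ is proper. This exhibits $\bcR$ as a countable union of Zariski closed subschemes of $\bcJ$.

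The step I expect to demand the most care is the application of Roitman's countability result in the form stated for the relation $R$: one must verify that the classical argument --- parametrising the connecting rational equivalences by countably many bounded algebraic families, each contributing a closed locus --- applies on the product $\Sym^d(S)\times\Sym^d(S)$ and yields genuinely \emph{closed} pieces $R_n$ rather than merely constructible ones. Once $R$ is known to be a countable union of closed subsets, the remaining ingredients --- properness and surjectivity of $\sigma_d$ from the choice of large $d$, and the identification of $j_{t*}$ with push-forward of cycles to $S$ --- are routine.
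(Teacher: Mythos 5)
Your proof is correct, but it takes a genuinely different route from the paper's. The paper re-runs Roitman's parametrization of rational equivalences \emph{inside the family}: it builds loci $W_d^{u,v}$ in the relative product $\Sym^{d,d}(S/C)$ as images of fibre products with $\Hom$-schemes of rational curves in symmetric powers of $S$, and then spends most of the proof on the specialization/normalization argument showing each piece is actually closed rather than merely constructible. You instead quote the absolute Roitman theorem on the fixed surface $S$ as a black box --- the relation $R\subset \Sym^d(S)\times\Sym^d(S)$ is a countable union of closed subsets --- and transport it to the family by two morphisms: the comparison map $\Phi$ from the relative symmetric product to $\Sym^d(S)\times\Sym^d(S)$ against a fixed reference multisection $D_0$, and the proper surjective relative Abel--Jacobi map $\sigma_d$. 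What your route buys is brevity and, importantly, an explicit descent from the symmetric product to $\bcJ$ (closedness of the images via properness of $\sigma_d$), a step the paper leaves implicit; the cost is the choice of a degree-$d$ multisection $D_0$, which the paper avoids by working with pairs $(A,B)$ of effective cycles. The one point you rightly flag --- that Roitman's result yields genuinely closed pieces $R_n$ --- is exactly the content of the closure argument occupying the bulk of the paper's proof, but in the absolute (non-relative) form you need it, it is standard and available in \cite{R} and \cite{BG}, so invoking it is legitimate.
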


\begin{proof}
So let us consider the relation that $z_b=z_b^+-z_b^-$ supported on $\Sym^g C_t$ such that its push-forward under $j_{t*}$ is rationally equivalent to zero on $S$. That means that there exists $f:\PR^1\to \Sym^{d,d}(S)$, such that
$$f(0)=j_{t*}(z_b^{+})+\gamma,f(\infty)=j_{t*}(z_b^{-})+\gamma\;.$$
In other words we have the following map $\ev:Hom^v(\PR^1_k,\Sym^d S)$, given by $f\mapsto (f(0),f(\infty))$. That gives us the morphism $f\mapsto (f(0),f(\infty))$, from $\ev:Hom^v(\PR^1_k,\Sym^d S)$ to $ \Sym^{d,d}(S)$. Consider the closed subscheme $\bcV_{d,d}$ of $ \Sym^{d,d}(S/C)$ (this is the product of relative symmetric powers of $S$ over $C$) given by $(t,z_1,z_2)$, such that $(z_1,z_2)$ belongs to $\Sym^{d,d}(C_t)$. Then consider the map from $\bcV_{d,d}\times \Sym^{u,u}S$ to $\Sym^{d+u,u,d+u,u}S$ given by
$$(A,B,C,D)\mapsto (j_{t*}(A)+C,C,j_{t*}(B)+D,D)\;.$$
Then we can write the fiber product $\bcV$ of $Hom^v(\PR^1_k,\Sym^d S)$ and $\bcV_{d,d}\times \Sym^{u,u} S$ over $\Sym^{d+u,u,d+u,u}S$. If we consider the projection from $\bcV$ to $\Sym^{d,d}(S/C)$, then we get that $A$ and $B$ are supported  on $C_t$, and their push-forward are rationally equivalent on $S$. Conversely if $A,B$ are supported  on $C_t$ and their push-forward rationally equivalent on $S$, then we have $f:\PR^1\to \Sym^{d+u,u,d+u,u}(S)$ of some degree $v$ such that
$$f(0)=(A+C,C)\; f(\infty)=(B+D,D)\;,$$
where $C,D$ are supported on $S$. This analysis says that the image of the projection from $\bcV$ to $ \Sym^{d,d}(S/C)$ , is a quasi-projective subscheme $W_{d}^{u,v}$ consisting of tuples $(t,A,B)$ such that $A,B$ are supported on $C_t$ and there exists $f:\PR^1_k\to \Sym^{d+u,u}S$ such that $f(0)=(j_{t*}(A)+C,C)$ and $f(\infty)=(j_{t*}(B)+D,D)$, where $f$ is of degree $v$ and $C,D$ are supported on $S$ and they are co-dimension $p$ and degree $u$ cycles. So it means that $W_d=\cup_{u,v}W_d^{u,v}$. Now we prove that the Zariski closure of $W_d^{u,v}$ is in $W_d$ for each $u,v$.

For that we prove the following,

$$W_d^{u,v}=pr_{1,2}(\wt{s}^{-1}(W^{0,v}_{d+u}\times \wt{W^{0,v}_u}))$$
where
$$\wt{s}(t,A,B,C,D)=(t,j_{t*}(A)+C,j_{t*}(B)+D,C,D)$$
from $ V_{d,d}\times \Sym^{u,u}(S)$ to $\Sym^{d+u,d+u,u,u}(S)\;.$
Here $\wt{W^{u,v}_d}$ is the collection of pairs $(C,D)$ in $\Sym^{d,d}S$, such that there exists a map $f$ from $\PR^1_k$ to $\Sym^{d+u,u}S$ and $(C',D')$ in $\Sym^{u,u}S$, such that
$$f(0)=(C+C',C'),\quad f(\infty)=(D+D',D')\;.$$
Let $(t,A,B,C,D)$ be such that its image under $\wt{s}$ is in $W^{0,v}_{d+u}\times \wt{W^{0,v}_u}$. It means that there exists an element $(t,g)$ in $C\times\Hom^v(\PR^1_k,\Sym^{d+u} S)$ and another $h$ in $\Hom^v(\PR^1_k,\Sym^{u}(X))$ such that $g(0)=j_{t*}(A)+C,g(\infty)=j_{t*}(B)+D$ and $h(0)=C,h(\infty)=D$. Let us consider $f=g\times h$, then $f$ belong to $\Hom^v(\PR^1_k,\Sym^{d+u,u}(X))$, such that
$$f(0)=(A+C,C),(f(\infty))=(B+D,D)\;.$$
It means that $(t,A,B)$ belong to $W^d_{u,v}$.

On the other hand suppose that $(t,A,B)$ belongs to $W_d^{u,v}$. Then there exists $f$ in $\Hom^v(\PR^1_k,\Sym^{d+u,u}(S))$ such that
$$f(0)=(j_{t*}(A)+C,C),f(\infty)=(j_{t*}(B)+D,D)\;,$$
here $(A,B)$ belongs to $\Sym^{d+u,d+u}C_t$.
Compose $f$ with the projections to $\Sym^{d+u}(S)$ and to $\Sym^u(S)$, then we have $g$ in $\Hom^v(\PR^1_k,\Sym^{d+u}(S))$ and $h\in\Hom^v(\PR^1_k,\Sym^{u}(S))$, such that
$$g(0)=A+C,g(\infty)=B+D$$
and
$$h(0)=C,h(\infty)=D\;,$$
and we have that the image of $A,B$ are contained in the symmetric power of $C_t$. So we have
$$W_d^{u,v}\subset pr_{1,2}(\wt{s}^{-1}(W^{0,v}_{d+u}\times \wt{W^{0,v}_u})\;. $$
Therefore we have that
$$W_d=pr_{1,2}(\wt{s}^{-1}(W_{d+u}\times W_u))\;.$$

Then we prove that the closure of $W_d^{0,v} $ is contained in $W_d$. Let $(t,A,B)$ be a closed point in the closure of ${W_d^{0,v}}$. Let $W$ be an irreducible component of ${W_d^{0,v}}$ whose closure contains $(t,A,B)$. Let $U$ be an affine neighborhood of $(t,A,B)$ such that $U\cap W$ is non-empty. Then there is an irreducible curve $C$ in $U$ passing through $(t,A,B)$. Let $\bar{C}$ be the Zariski closure of $C$ in $\bar{W}$. Consider the map
$$e:  \Hom^v(\PR^1_k,\Sym^{d}(S))\to \Sym^{d,d}(S)$$
given by
$$f\mapsto (f(0),f(\infty))$$
and
$$(t,A,B)\in \Sym^{d,d}(S/C)\mapsto (j_{t*}(A),j_{t*}(B))\;. $$
Consider their fibered product, its image under the projection to second, third and fourth co-ordinate is $W_d^{0,v}$. Let us choose a curve $T$ in $U$ the fibered product, such that the closure of $e(T)$ is $\bar C$.  Consider the normalization $\wt{T}$ of the Zariski closure of $T$. Let $\wt{T_0}$ be the pre-image of $T$ in the normalization. Now the regular morphism $\wt{T_0}\to T\to \bar C$ extends to a regular morphism from $\wt{T}$ to $\bar C$. Now let $(f,t,A,B)$ be a pre-image of $(t,A,B)$. Then we have $f(0)=j_{t*}(A);, f(\infty)=j_{t*}(B)$. Therefore the push-forward of $A,B$ are rationally equivalent.

On the other hand by Roitman's argument \cite{R}, we have that $\wt{W^{0,v}_u}$ is closed, hence $W_d^{u,v}$ is closed. That finishes the proof.

\end{proof}

\begin{theorem}
For every $t$, the kernel of $j_{t*}$ is a countable union of translates of an abelian variety.
\end{theorem}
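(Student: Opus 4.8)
The plan is to fix a smooth fiber $C_t$ and analyze the kernel of $j_{t*}\colon J(C_t)\cong A_0(C_t)\to A_0(S)$ by pulling back the countable decomposition established in Theorem 2.1. Recall that under the Abel--Jacobi isomorphism a point of $J(C_t)$ is represented by a difference $A-B$ with $A,B\in\Sym^g C_t$ of the same degree, and $z=[A]-[B]$ lies in $\ker(j_{t*})$ precisely when $j_{t*}(A)$ and $j_{t*}(B)$ are rationally equivalent on $S$. By the previous theorem the locus $W_d=\bigcup_{u,v}W_d^{u,v}$ of such pairs $(t,A,B)$ is a countable union of Zariski closed subsets of $\Sym^{d,d}(S/C)$, and each $W_d^{u,v}$ is in fact closed. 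First I would restrict attention to the fiber over the fixed $t$: set $W_{d,t}^{u,v}=W_d^{u,v}\cap \Sym^{d,d}(C_t)$, so that $\ker(j_{t*})$ is the image of $\bigcup_{u,v}W_{d,t}^{u,v}$ under the Abel--Jacobi map $\Sym^{d,d}(C_t)\to J(C_t)$, $(A,B)\mapsto [A]-[B]$.

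Next I would push the closed subsets $W_{d,t}^{u,v}$ forward along this Abel--Jacobi morphism. Since the morphism $\Sym^{d,d}(C_t)\to J(C_t)$ is proper (the symmetric powers of a smooth projective curve are projective), the image of each closed $W_{d,t}^{u,v}$ is a Zariski closed subset $Z^{u,v}$ of $J(C_t)$, and $\ker(j_{t*})=\bigcup_{u,v}Z^{u,v}$ is a countable union of Zariski closed subsets of the abelian variety $J(C_t)$. It therefore remains to upgrade ``countable union of closed subsets'' to ``countable union of translates of a single abelian subvariety.'' The key structural input is that $\ker(j_{t*})$ is a \emph{subgroup} of $J(C_t)$: the relation of rational equivalence of push-forwards is compatible with addition of zero-cycles, so the kernel is closed under the group law and inversion. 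Thus I would argue that a subgroup of an abelian variety which is a countable union of Zariski closed subsets must be a countable union of cosets of its identity component.

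The heart of the argument, and the step I expect to be the main obstacle, is this final structural claim. The standard approach, going back to Roitman, runs as follows. Let $Z$ denote the Zariski closure of $\ker(j_{t*})$ inside $J(C_t)$. Then $Z$ is a closed subgroup of the abelian variety $J(C_t)$, hence a finite union of translates of an abelian subvariety $A_t:=Z^0$, the identity component. The delicate point is to show that $\ker(j_{t*})$ itself is a countable union of translates of $A_t$, rather than merely being dense in $Z$. Here one uses that $\ker(j_{t*})$ is a countable union of closed subsets $Z^{u,v}$ together with the fact that $A_t$ itself is contained in the kernel: since $\ker(j_{t*})$ is an uncountable subgroup whenever it is not countable, at least one $Z^{u,v}$ must be positive-dimensional, and one checks using the group structure that the tangent directions at the identity integrate to an abelian subvariety $A_t$ lying in the kernel. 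Then each coset of $A_t$ meeting $\ker(j_{t*})$ is entirely contained in it, and since the $Z^{u,v}$ are countably many closed sets the collection of cosets met is countable.

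Concretely, I would carry this out by first producing $A_t$ as the connected component of the identity of the Zariski closure $Z$, verifying $A_t\subseteq\ker(j_{t*})$ by a continuity/specialization argument on the family (for a fixed curve, a one-parameter family of cycles whose push-forwards are rationally equivalent to a fixed cycle stays in the kernel, using properness of the $\Hom$-schemes as in the proof of Theorem 2.1). Once $A_t\subseteq\ker(j_{t*})$, the kernel is a union of $A_t$-cosets, and since each $Z^{u,v}$ is a finite union of such cosets together with lower-dimensional pieces that get absorbed, the total is a countable union of translates of $A_t$. The main obstacle is making the passage from the countable union of closed sets to the honest coset decomposition rigorous; I would rely on the group-theoretic rigidity of abelian varieties and on the fact, already implicit in Theorem 2.1, that rational equivalence is detected by the countable family of $\Hom$-schemes $\Hom^v(\PR^1_k,\Sym^d S)$.
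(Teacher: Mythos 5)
Your proposal is essentially correct in strategy, but it takes a genuinely different route from the paper for the simple reason that the paper gives no argument at all here: its entire proof is the citation ``Follows from \cite{BG}[Proposition 6].'' What you have written is a reconstruction of the Roitman-type argument that underlies that cited proposition: restrict the countable closed decomposition $W_d=\bigcup_{u,v}W_d^{u,v}$ of Theorem 2.1 to the fiber over $t$, push it forward along the proper map $\Sym^{g,g}(C_t)\to J(C_t)$ to exhibit $\ker(j_{t*})$ as a countable union of Zariski closed subsets of $J(C_t)$, and then invoke the structural fact that a subgroup of an abelian variety over an uncountable field which is a countable union of closed subsets is a countable union of translates of an abelian subvariety. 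This is exactly the mechanism the paper is outsourcing, so your version buys self-containedness at the cost of having to prove the structural lemma honestly.

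Two points in that lemma need tightening. First, your identification $A_t=Z^0$, where $Z$ is the Zariski closure of $\ker(j_{t*})$, is not correct in general: if the kernel is a countable dense subgroup of some positive-dimensional subvariety, the identity component of its closure is positive-dimensional while the correct $A_t$ is trivial. The right construction takes an irreducible component $Z^{u,v}$ of maximal dimension $d$, translates it to pass through $0$, and shows that the closure of the subgroup it generates stabilizes at an abelian subvariety $A_t$ of dimension $d$ \emph{contained in} $\ker(j_{t*})$ (using that an irreducible variety over an uncountable field is not a countable union of proper closed subsets, so sums and differences of that component stay inside a single component of the same dimension). Your phrase ``the tangent directions at the identity integrate to an abelian subvariety'' should be replaced by this difference-and-iterated-sum argument. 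Second, the countability of the set of cosets is cleaner than ``lower-dimensional pieces get absorbed'': every coset $g+A_t$ contained in the kernel is irreducible of dimension $d$ and covered by the countably many $Z^{u,v}$, hence equals one of them by the same uncountability argument, so there are at most countably many cosets. With these repairs your argument is a complete proof, and arguably a more useful one than the paper's bare citation.
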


\begin{proof}
Follows from \cite{BG}[proposition 6].
\end{proof}

So we have that $\bcR_t$ is a countable union of translates of an abelian variety $A_t$ for all $t$.

\begin{theorem}
Consider the family $\bcO$ of zero cycles supported on $\bcJ$, given by
$$\bcO=\{(t,z)| z\in J(C_t), z=0\;.\}$$
Then through this family admits a morphism to  a unique component of $\bcR$, which is an abelian scheme.
\end{theorem}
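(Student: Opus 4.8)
The plan is to exhibit $\bcO$ as the identity section of the abelian scheme $\bcJ\to C$ and to single out the component of $\bcR$ that it meets. First I would note that $\bcO=\{(t,0)\}$ is exactly the image of the zero section $e\colon C\to\bcJ$, so $\bcO\cong C$ is irreducible, and that $\bcO\subseteq\bcR$ because $j_{t*}(0)=0$ for every $t$. Writing $\bcR=\bigcup_i Z_i$ for the countable family of Zariski closed subschemes produced in Theorem 2.1, we get $\bcO=\bigcup_i(\bcO\cap Z_i)$, a countable union of closed subsets of the irreducible complex variety $\bcO$. Since over $\CC$ an irreducible variety is never a countable union of proper closed subsets, some $Z_{i_0}$ contains the whole zero section; I set $\bcA:=Z_{i_0}$, or more precisely the unique irreducible component of it that dominates $C$ and contains $\bcO$.

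Next I would identify the fibers and prove uniqueness of $\bcA$. By Theorem 2.2 each fiber $\bcR_t=\ker(j_{t*})$ is a countable union of translates $a+A_t$ of the abelian variety $A_t\subseteq J(C_t)$, and a translate $a+A_t$ contains the origin precisely when $a\in A_t$, that is, precisely when it equals $A_t$. Hence $A_t$ is the unique translate through the origin, so the fiber $\bcA_t$, which contains $0$, has $A_t$ as its identity component; a dimension count together with the group structure then gives $\bcA_t=A_t$. This same observation forces uniqueness: any component of $\bcR$ through the zero section must have fiber $A_t$ over the origin for all $t$, and by separatedness over $C$ two such components coincide. Thus $\bcA$ is the unique component met by $\bcO$, and $\bcA_t=A_t$ is the abelian variety whose translates fill $\ker(j_{t*})$.

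It remains to put an abelian scheme structure on $\bcA$. Properness is automatic: $\bcA$ is closed in $\bcJ$ and the relative Jacobian $\bcJ\to C$ is an abelian scheme, hence proper, so $\bcA\to C$ is proper. For the group law I would use that each fiber $A_t$ is an abelian subvariety of $J(C_t)$: the identity section factors through $\bcA$ by construction, the relative inversion $[-1]$ of $\bcJ$ preserves $\bcA$ because $-A_t=A_t$, and the relative addition $m\colon\bcJ\times_C\bcJ\to\bcJ$ carries $\bcA\times_C\bcA$ to a subset whose fiber over $t$ is $A_t+A_t=A_t=\bcA_t$; since $\bcA$ is closed and this image lies fiberwise in $\bcA$, the morphism $m$ factors through $\bcA$. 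Hence $\bcA$ is a closed subgroup scheme of $\bcJ$ with abelian-variety fibers, and together with flatness this is exactly an abelian scheme. The inclusion $\bcO\hookrightarrow\bcA$ is then the asserted morphism.

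The hard part will be the last point, namely upgrading the fiberwise picture to a genuine flat and smooth family over all of $C$. Semicontinuity controls $\dim A_t$ only from one side at special points, so the real work is to rule out any jump in the dimension of $A_t$ over the degeneration locus, and thereby to establish flatness and smoothness of $\bcA\to C$ everywhere rather than merely over a dense open subset. It is here that the monodromy and Roitman-type inputs underlying Theorems 2.1 and 2.2 must be brought to bear, since they are what pin down the family of identity components $A_t$ uniformly in $t$.
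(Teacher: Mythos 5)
Your overall strategy matches the paper's: single out the component of $\bcR$ containing the zero section, identify its fibers with the abelian varieties $A_t$, and show stability under the relative addition and inversion of $\bcJ$. Two of your steps are in fact cleaner than the paper's: you get existence of a component containing all of $\bcO$ from the fact that an irreducible variety over $\CC$ is never a countable union of proper closed subsets (the paper instead works at the generic point of $C$ and appeals to irredundancy), and you honestly flag at the end that flatness and the absence of dimension jumps in $A_t$ are nowhere established --- the paper simply asserts the abelian scheme structure without addressing this.

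The genuine gap is in your uniqueness and fiber-identification step. From the translate description of $\bcR_t$ you conclude that ``any component of $\bcR$ through the zero section must have fiber $A_t$,'' but this does not follow: a component $Z$ with $0\in Z_t$ only satisfies $Z_t\subseteq\bigcup_a (a+A_t)$, and $Z_t$ could be a proper closed subset of $A_t$ containing the origin, or a union of pieces of several translates; so neither $Z_t=A_t$ nor the appeal to separatedness to merge two such components is justified. Likewise the phrase ``$\bcA_t$ has $A_t$ as its identity component'' presupposes a group structure on $\bcA_t$, which is exactly what is being proved. The paper handles this point differently: if several components $\bcR_1,\dots,\bcR_m$ passed through zero at the generic point $\eta$, their sum under the group law of $J(C_\eta)$ would be a closed subvariety of $\ker(j_{\eta*})$ containing every $\bcR_{i\eta}$, contradicting the irredundancy of the decomposition of $\bcR$; with uniqueness in hand, the stability of $\bcR_0$ under $\mu$ and under inversion follows because the image of $\bcR_0\times_C\bcR_0$ is contained in $\bcR$, admits a map from $\bcO$, and (over an uncountable field) lies in a single component, which must be $\bcR_0$. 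Only afterwards is the fiber identified with $A_t$, using the argument of \cite{BG} that the generic fiber through zero is an abelian variety. Your fiberwise argument for the group law ($A_t+A_t=A_t$) would be fine once $\bcA_t=A_t$ is known, but as written it rests on the unproved fiber identification, whereas the paper's component-counting argument does not need it.
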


\begin{proof}
Suppose that there passes $m$-many components of $\bcR$ which admits a morphism from the family of elements $\bcO$ mentioned above. Call them $\bcR_1,\cdots,\bcR_m$. So given any $t\in C$, we have ${\bcR_i}_t$ passes through the trivial element $\bcO_t$ in $J(C_t)$ for all $i$. So in particular for the generic point $\eta$ of $C$. Therefore we have that $\sum_i \bcR_{i\eta}$ is contained in the kernel of $j_{\eta*}$ and it contains $\bcR_{i\eta}$ for all $i$. This is impossible by the irredundancy of the decomposition of $\bcR$. Therefore there exists a unique $\bcR_0$, whose generic fiber passes through the zero in $J(C_\eta)$. Hence $\bcR_0$ admits a morphism  from the family of elements $\bcO$ given above and it is the unique one with this property. Following the argument in \cite{BG}, we can prove that $\bcR_{0\eta}$ is an abelian variety. Now we prove that $\bcR_0$ is an abelian scheme. For that we have to prove that $\bcR_0$ is a group scheme over $C$. We have $\bcJ$ an abelian scheme. So there exist a group operation $\mu:\bcJ\times_C \bcJ\to \bcJ$. Restrict this operation to $\bcR_0\times_C \bcR_0$. Then the image of this restriction map is a Zariski closed subscheme of $\bcJ$, admitting a map from the family $\bcO$ and also it is contained in $\bcR$. Because $\bcR$ is closed under the operation $\mu$. Since the ground field $k$ is uncountable, there exists a unique component $\bcR_i$, of $\bcR$, which contains the image. Also $\bcR_i$ should admit a morphism from $\bcO$. Hence by uniqueness of $\bcR_0$, we have that $\bcR_i=\bcR_0$. Therefore the image of $\mu$ from $\bcR_0\times_C \bcR_0$, is in $\bcR_0$. Consider the inverse map $i:\bcJ\to \bcJ$. We have to prove that $\bcR_0$ is closed under $i$. Consider the image $i(\bcR_0)$, this admits a morphism from $\bcO$, hence it must land inside $\bcR_0$, by the uniqueness. So we have that $\bcR_0$ is an abelian scheme. Also $\bcR_{0,t}$, therefore is an abelian variety passing through the trivial element of $J(C_t)$, hence by uniqueness it is the abelian variety $A_t$, whose translates cover $\ker(j_{t*})$.
\end{proof}

\section{Application of the above result in presence of monodromy}

Consider the following situation. We have a smooth projective surface $S$ over the field of complex numbers which  is embedded in some $\PR^N$. Then consider a Lefschetz pencil on $S$. That is we have a fibration $S\to \PR^1$, with the singular fibers having atmost one ordinary double point. Then consider the Zariski open set $U=\PR^1\setminus 0_1,\cdots,0_m$, where $0_i$'s corresponds to the singular fibers. We have the family
$$\wt{S}_U:=\{(s,t):s\in C_t, t\in U\}$$
where $C_t$ is the smooth fiber over $t\in U$. Then we have the monodromy action of $\pi_1(U,0)$, $0$ fixed in $U$, on the vanishing cohomology
$$\ker(j_{t*}:H^1(C_t,\QQ)\to H^3(S,\QQ))\;.$$
Now consider the kernel of the push-forward $j_{t*}$ in a family. Then by the above we have an abelian scheme $\bcA\subset \bcJ$ over $U$. Infact $\bcA\subset \bcA_{van}$, where $\bcA_{van}$ is the abelian scheme corresponding to the variation of Hodge structures given by
$$\bcH_{van}=\ker(j_{t*}:H^1(C_t,\QQ)\to H^3(S,\QQ))\;.$$
By the equivalence of variations of Hodge structures of weight one and abelian schemes, $\bcA$ corresponds to a sub-variation of Hodge structures $\bcH$ in $\bcH_{van}$, which gives rise to a local system, contradicting the irreducibility of the local system $\bcH_{van}$. Therefore $\bcA$ is either trivial or all of $\bcA_{van}$. In particular when the irregularity of the surface is zero, then $\bcA$ is either trivial or all of $\bcJ$. So we prove that for any $t$ in ${\PR^N}^*$ such that $C_t$ is smooth we have $A_t=0$ or $J(C_t)$ and moreover it happens in a family: meaning that:

\begin{proposition}
For all smooth fibers $C_t$, $A_t$ is zero or for all smooth fibers $C_t$, $A_t$ is $J(C_t)$.
\end{proposition}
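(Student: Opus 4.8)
The plan is to reduce the asserted dichotomy to the irreducibility of the vanishing-cohomology local system, passing through the dictionary between weight-one variations of Hodge structure and abelian schemes. First I would take, from the family version established above, the abelian scheme $\bcA\subset\bcJ$ over $U$ whose fiber at $t$ is precisely the abelian variety $A_t$ whose translates exhaust $\ker(j_{t*})$, and record that $\bcA\subset\bcA_{van}$: since $A_t$ is killed by $j_{t*}:J(C_t)\to A_0(S)$, its weight-one realization is carried into $\ker(j_{t*}:H^1(C_t,\QQ)\to H^3(S,\QQ))=\bcH_{van}$. This is exactly the inclusion recorded in the discussion preceding the statement, so I would simply invoke it.

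Next I would pass through the equivalence of categories between polarizable $\QQ$-variations of Hodge structure of weight one (up to isogeny) and abelian schemes (up to isogeny) over $U$. Under this equivalence the inclusion $\bcA\hookrightarrow\bcA_{van}$ corresponds to an inclusion of variations of Hodge structure $\bcH\hookrightarrow\bcH_{van}$, so that the $\QQ$-local system underlying $\bcH$ is a sub-local system of the local system underlying $\bcH_{van}$. Because we work with $\QQ$-coefficients, the fact that the equivalence is only up to isogeny is harmless: an isogeny of $\bcA$ onto its image still produces a genuine sub-local system.

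The crux is then the irreducibility of $\bcH_{van}$ as a local system on $U$, equivalently the irreducibility of the monodromy representation of $\pi_1(U,0)$ on the vanishing cohomology. This is the classical conclusion of Lefschetz pencil theory: the monodromy is generated by the Picard--Lefschetz transvections in the vanishing cycles and acts irreducibly on $\bcH_{van}$ (cf. \cite{Vo}). Granting this, the sub-local system underlying $\bcH$ is either $0$ or all of $\bcH_{van}$, whence $\bcH=0$ or $\bcH=\bcH_{van}$; translating back, $\bcA$ is either the zero section of $\bcJ$ or all of $\bcA_{van}$. The essential obstacle of the argument is concentrated in this irreducibility, but it is supplied entirely by the monodromy theory, so the remaining steps are bookkeeping.

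Finally I would read off the fiberwise statement. If $\bcA$ is the zero section then $A_t=\bcA_t=0$ for every smooth fiber, and if $\bcA=\bcA_{van}$ then $A_t=\bcA_{van,t}$ for every smooth fiber; the two alternatives are uniform in $t$ precisely because $\bcA$ is one connected abelian scheme and not a fiber-by-fiber choice. To obtain the form stated, I would specialize to the case of vanishing irregularity: when $q(S)=0$ one has $H^1(S)=0$, so by Poincar\'e duality $j_{t*}$ vanishes on $H^1$ and $\bcH_{van}=R^1\pi_*\QQ$ is the full weight-one system, giving $\bcA_{van}=\bcJ$ and $\bcA_{van,t}=J(C_t)$. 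The dichotomy then becomes ``$A_t=0$ for all smooth $C_t$'' or ``$A_t=J(C_t)$ for all smooth $C_t$,'' which is the proposition. A closing remark would note that the locus of smooth sections in ${\PR^N}^*$ is connected and is exhausted by Lefschetz pencils, so that the conclusion drawn on a single pencil propagates to all smooth hyperplane sections.
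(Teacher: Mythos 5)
Your proposal is correct and follows essentially the same route as the paper: both arguments pass from the abelian scheme $\bcA\subset\bcA_{van}$ to a sub-variation of Hodge structure of $\bcH_{van}$ via the weight-one equivalence, and then invoke the irreducibility of the vanishing-cohomology monodromy to force the dichotomy, specializing to $q(S)=0$ to identify $\bcA_{van}$ with $\bcJ$. Your added care about the isogeny ambiguity and the propagation from a single Lefschetz pencil to all of the smooth locus in ${\PR^N}^*$ only makes explicit steps the paper leaves implicit.
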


It is interesting when the abelian variety $A_t$ is trivial for all $t$, such that $C_t$ is smooth. This is because in this case the trivial abelian variety comes from the trivial abelian subscheme of $\bcJ$. So it corresponds to a global section of the local system $H^{1}(C_t,\CC)/H^1(C_t,\ZZ)$. Since this local system has the action of $\pi_1(U,t)$, where $U$ is the set in ${\PR^N}^*$ parametrising smooth hyperplane sections of $S$. Let $\beta$ be such a section. Then for $\gamma$ in $\pi_1(U,t)$, we have that $\rho(\gamma)(\beta)$ is again in $H^1(C_t,\CC)/H^1(C_t,\ZZ)$. This means that for a lift $\wt{\beta}$ of $\beta$ to $H^1(C_t,\CC)$ we have
$$\rho(\gamma)(\wt{\beta})-\wt{\beta}\in H^{1}(C_t,\ZZ)\;.$$
By the Picard-Lefschetz formula the action of $\rho$ on $\Aut(H^1(C_t,\CC))$ is given by
$$\rho(\gamma_i)(\wt{\beta})=\beta\pm\langle \wt{\beta}, \delta_i\rangle \delta_i$$
where $\gamma_i$'s are the generators of the group $\pi_1(U,t)$ and $\delta_i$ is the corresponding vanishing cycle. Then by the above we have that
$$\langle \wt{\beta}, \delta_i\rangle \delta_i$$
is in $H^{1}(C_t,\ZZ)$. Therefore $\langle \wt{\beta}, \delta_i\rangle$ is in $\ZZ$, hence in $\QQ$. Therefore $\wt{\beta}$ is in $H^1(C_t,\QQ)$, as the vanishing cycles generate $H^1(C_t,\QQ)$. Therefore $\beta$ is torsion. Consider the case when the abelian scheme $\bcA$ is trivial. Then all the elements in $\ker(j_{t*})$
corresponds to a global section of the local system $H^1(C_t,\CC)/H^1(C_t,\ZZ)$. So we have that the kernel of $j_{t*}$ is torsion for a general $t$. So we proved the following theorem:

\begin{theorem}
Consider a $t$ in $U$, such that the corresponding fiber $C_t$ in $S$ is smooth. Suppose that the abelian variety $A_t$ is trivial and hence the kernel of $j_{t*}$ is countable. Then the kernel of $j_{t*}$ is  torsion.
\end{theorem}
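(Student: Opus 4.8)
The plan is to show that triviality of the abelian variety $A_t$ forces every class in $\ker(j_{t*})$ to be a torsion point of $J(C_t)$, and the mechanism is the Picard-Lefschetz formula applied to a would-be non-torsion global section. First I would invoke Proposition above together with Theorem on the family structure: since $A_t$ is assumed trivial, Proposition tells us that $A_s = 0$ for \emph{all} smooth fibers $C_s$, so the abelian subscheme $\bcA \subset \bcJ$ is the trivial (zero) subscheme over $U$. Consequently, by Theorem, the kernel $\ker(j_{t*})$ is the fiber $\bcR_t$, which is a countable union of translates of $A_t = 0$, i.e.\ a countable collection of points, each of which is itself a translate of zero and hence defines a global section of the local system $H^1(C_t,\CC)/H^1(C_t,\ZZ)$ over $U$ (the monodromy permutes these countably many sections, and since they form part of $\bcR$, which is monodromy-invariant, each is fixed up to the lattice).

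Next, for a single such class $\beta \in H^1(C_t,\CC)/H^1(C_t,\ZZ)$ that is $\pi_1(U,t)$-invariant, I would choose a lift $\wt{\beta} \in H^1(C_t,\CC)$ and translate invariance into the statement that $\rho(\gamma)(\wt{\beta}) - \wt{\beta} \in H^1(C_t,\ZZ)$ for every $\gamma \in \pi_1(U,t)$. Applying this to the generators $\gamma_i$ and using the Picard-Lefschetz formula $\rho(\gamma_i)(\wt{\beta}) = \wt{\beta} \pm \langle \wt{\beta}, \delta_i\rangle \delta_i$, the invariance condition becomes $\langle \wt{\beta}, \delta_i \rangle \delta_i \in H^1(C_t,\ZZ)$. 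Since the $\delta_i$ are primitive integral vanishing cycles, this forces each pairing $\langle \wt{\beta}, \delta_i\rangle$ to lie in $\ZZ$, and in particular in $\QQ$.

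Then I would use the fundamental fact from Lefschetz theory that the vanishing cycles $\delta_i$ span $H^1(C_t,\QQ)$ (this is where the Lefschetz pencil and the nondegeneracy of the intersection pairing enter). Since the pairing of $\wt{\beta}$ with a $\QQ$-spanning set of $H_1$ lands in $\QQ$, and the intersection form is a perfect pairing over $\QQ$, it follows that $\wt{\beta} \in H^1(C_t,\QQ)$. Therefore $\wt{\beta}$ represents a rational point modulo the integral lattice, so $\beta$ is torsion in $H^1(C_t,\CC)/H^1(C_t,\ZZ) = J(C_t)$. Applying this to each of the countably many classes comprising $\ker(j_{t*})$ yields that every element of the kernel is torsion.

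The main obstacle, and the step requiring the most care, is the passage from "the abelian scheme $\bcA$ is trivial" to "each element of $\ker(j_{t*})$ is a genuine monodromy-invariant section of the quotient local system." One must be careful that the countably many translates comprising $\bcR_t$ really do each extend to sections over $U$ respecting the monodromy action, rather than merely existing fiberwise; this is precisely where Theorem on the family $\bcR$ being a countable union of Zariski closed subschemes of $\bcJ$ is essential, since it guarantees that the pointwise kernels organize into global subschemes on which $\pi_1(U,t)$ acts. Once this structural input is in hand, the Picard-Lefschetz computation and the spanning of $H^1(C_t,\QQ)$ by vanishing cycles finish the argument directly.
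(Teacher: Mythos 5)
Your proposal follows essentially the same route as the paper: triviality of $\bcA$ makes each element of $\ker(j_{t*})$ a monodromy-invariant section of $H^1(C_t,\CC)/H^1(C_t,\ZZ)$, the Picard--Lefschetz formula forces $\langle\wt{\beta},\delta_i\rangle\in\ZZ$, and the fact that the vanishing cycles span $H^1(C_t,\QQ)$ gives $\wt{\beta}\in H^1(C_t,\QQ)$, hence $\beta$ torsion. This is exactly the argument in the text (following Voisin), and your extra care about why the fiberwise kernel elements genuinely extend to monodromy-equivariant sections is a point the paper glosses over rather than handles differently.
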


\begin{corollary}
Consider a hyperplane section of a K3 surface which is smooth. Then the corresponding Gysin kernel is torsion.
\end{corollary}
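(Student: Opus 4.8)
The plan is to check that a K3 surface meets the two numerical hypotheses under which the preceding results operate, and then to read off the conclusion from the dichotomy Proposition together with the preceding Theorem. Concretely, a K3 surface $S$ has trivial canonical bundle, so its geometric genus is $p_g=h^0(S,\omega_S)=1>0$, while its irregularity vanishes, $q=h^1(S,\bcO_S)=0$. These are exactly the invariants that drive the argument, so the whole proof reduces to feeding them into the machinery already established.

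First I would use $p_g>0$ to rule out the vanishing of $j_{t*}$. Following \cite{BG} (as recalled in the introduction), if $j_{t*}$ were the zero map then the images of the symmetric products of the smooth hyperplane sections in $A_0(S)$ would all be trivial; since these sections sweep out $S$, this would force $A_0(S)$ to be ``finite dimensional'', contradicting Mumford's theorem, which produces an infinite dimensional $A_0(S)$ as soon as $p_g>0$. Hence $j_{t*}\neq 0$ for a very general smooth $C_t$, and so the abelian variety $A_t$ whose translates cover $\ker(j_{t*})$ is a proper subvariety of $J(C_t)$; in particular $A_t\neq J(C_t)$ at that very general point.

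Next I would invoke the irregularity hypothesis. Because $q=0$ we have $H^3(S,\QQ)=0$, so the whole of $H^1(C_t,\QQ)$ is vanishing cohomology and $\bcA_{van}=\bcJ$. The dichotomy Proposition then says that either $A_t=0$ for every smooth fiber or $A_t=J(C_t)$ for every smooth fiber. The previous paragraph exhibits a single smooth section with $A_t\neq J(C_t)$, which excludes the second alternative; therefore $A_t=0$ for \emph{every} smooth hyperplane section $C_t$. Finally I would apply the preceding Theorem: triviality of $A_t$ forces $\ker(j_{t*})$ to be torsion, and since this now holds uniformly, the corollary follows for any smooth hyperplane section.

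The delicate point, and the one I expect to be the main obstacle, is the passage from ``very general $t$'' to ``all smooth $t$''. The nonvanishing of $j_{t*}$ coming from \cite{BG} is only guaranteed at a very general point, whereas the corollary asserts torsion for an arbitrary smooth section. This gap is bridged precisely by the family and monodromy machinery encoded in the abelian scheme $\bcA\subset\bcJ$ and the irreducibility of the local system $\bcH_{van}$: the dichotomy is a statement over the whole parameter space of smooth sections, so nonvanishing at one very general point propagates to the triviality of $\bcA$, and hence to $A_t=0$ for every smooth $C_t$.
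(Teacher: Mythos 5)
Your proposal is correct and follows essentially the argument the paper intends (the corollary is stated without proof, but the introduction sketches exactly this reasoning): $q=0$ gives the dichotomy $A_t=0$ or $A_t=J(C_t)$ uniformly over the smooth fibers, $p_g=1>0$ together with Mumford's theorem rules out the second alternative, and the preceding theorem then yields torsion. Your explicit attention to propagating nonvanishing from a very general $t$ to all smooth $t$ via the monodromy dichotomy is a useful clarification of a step the paper leaves implicit.
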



\begin{thebibliography}{aaaa}
\bibitem[BG]{BG} K. Banerjee and V. Guletskii, {\em Rational equivalence for line configurations on cubic hypersurfaces in $\mathbb P^5$.}, {\small \tt arXiv:1405.6430v1}, 2014.
\bibitem[R]{R} A.Roitman, {\em $\Gamma$-equivalence of zero dimensional cycles (Russian)}, Math. Sbornik. 86(128), 1971, 557-570.
\bibitem[Vo]{Vo} C.Voisin, {\em Complex algebraic geometry and Hodge theory II}, Cambridge studies of Mathematics, 2002.


\end{thebibliography}
\end{document}